\newtheorem{theorem}{Theorem}[section]
\newtheorem{proposition}[theorem]{Proposition}
\newtheorem{lemma}[theorem]{Lemma}
\theoremstyle{definition}
\newtheorem{remark}[theorem]{Remark}
\newtheorem{problem}[theorem]{Problem}
\numberwithin{equation}{section}
\numberwithin{figure}{section}
\newcommand\Ecal{\mathcal{E}}
\newcommand\C{\mathbb{C}}
\newcommand\CP{\mathbb{CP}}
\newcommand\N{\mathbb{N}}
\newcommand\U{\mathbb{U}}
\newcommand\igot{\mathfrak{i}}
\renewcommand\igot{\mathfrak{i}}
\renewcommand\imath{\igot}
\newcommand\wt{\widetilde}
\newcommand\di{\partial}
\newcommand\Aut{\mathrm{Aut}}
\newcommand\Hom{\mathrm{Hom}}
\def\Ell1{\mathrm{Ell_1}}
\def\DEll1{\mathrm{DEll_1}}
\numberwithin{equation}{section}
\begin{document}
\title{Every projective Oka manifold is elliptic}
\author{Franc Forstneri\v{c} and Finnur L\'arusson}

\address{Franc Forstneri\v c, Faculty of Mathematics and Physics, University of Ljubljana, Jadranska 19, 1000 Ljubljana, Slovenia}

\address{Franc Forstneri\v c, Institute of Mathematics, Physics, and Mechanics, Jadranska 19, 1000 Ljubljana, Slovenia}
\email{franc.forstneric@fmf.uni-lj.si}

\address{Finnur L\'arusson, Discipline of Mathematical Sciences, University of Adelaide, Adelaide SA 5005, Australia}
\email{finnur.larusson@adelaide.edu.au}

\subjclass[2020]{Primary 32Q56; secondary 32E10}

\date{29 March 2025.  Most recent edits 4 May 2026}

\keywords{Oka manifold, elliptic manifold, subelliptic manifold, projective manifold, Stein manifold}

\begin{abstract}
We show that every projective Oka manifold
is elliptic in the sense of Gromov. This gives an affirmative answer
to a long-standing open question.
\end{abstract}

\maketitle

%\setcounter{tocdepth}{1}

%\tableofcontents

%
%
%    INTRODUCTION
%
%
\section{Introduction}\label{sec:intro} 

A complex manifold $Y$ is said to be an Oka manifold
if it satisfies all forms of the homotopy principle 
(also called the Oka principle in this context) for holomorphic maps $X\to Y$
from any Stein manifold $X$. One of the simplest characterisations
of this class of manifolds is the convex approximation 
property introduced in \cite{Forstneric2006AM}; see
also \cite[Sect.\ 5.4]{Forstneric2017E}.
In Gromov's terminology \cite[3.1, p.\ 878]{Gromov1989}, 
Oka manifolds are called $\mathrm{Ell}_\infty$ manifolds.

A complex manifold $Y$ is said to be elliptic if it admits
a dominating holomorphic spray $s:E\to Y$ defined on the
total space of a complex vector bundle $\pi:E\to Y$
\cite[0.5, p.\ 855]{Gromov1989}. 
This means that $s$ restricts to the identity map on the zero 
section $E_0\cong Y$ of $E$, and for every $y\in Y$, the differential
$ds_{0_y}$ at the origin $0_y\in E_y=\pi^{-1}(y)$ maps 
the fibre $E_y$ onto $T_yY$.
An ostensibly weaker condition, called subellipticity, was 
introduced by the first-named author in \cite[Definition 2]{Forstneric2002MZ}.
It asks for the existence of finitely many holomorphic
sprays $(E_j,\pi_j,s_j)$ on $Y$ $(j=1,\ldots,m)$ that are dominating, 
in the sense that
\begin{equation}
\label{eqn:domination}
    (ds_1)_{0_y}(E_{1,y}) + (ds_2)_{0_y}(E_{2,y})+\cdots\\
                     + (ds_m)_{0_y}(E_{m,y})= T_y Y
                     \quad \text{for all $y\in Y$}.
\end{equation}
One of the main results of Oka theory is that every  
elliptic manifold is an Oka manifold 
(see Gromov \cite[0.6, p.\ 855]{Gromov1989}
and \cite{ForstnericPrezelj2000}), and every subelliptic 
manifold is an Oka manifold (see \cite[Theorem 1.1]{Forstneric2002MZ}). 
For a comprehensive survey, see \cite[Chap.\ 5]{Forstneric2017E}.
Examples of elliptic and subelliptic manifolds can be found 
in \cite[Sect.\ 6.4]{Forstneric2017E} and in the surveys \cite{Forstneric2023Indag,Forstneric2013KAWA,ForstnericLarusson2011}.
Every complex homogeneous manifold
is elliptic but not conversely.  Several recent results are mentioned below.

In this paper we prove the following main result.

%
%  MAIN THEOREM
%
\begin{theorem}\label{th:elliptic}
Every projective Oka manifold is elliptic.
\end{theorem}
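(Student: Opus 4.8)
The plan is to verify the definition of ellipticity directly, by producing a single dominating holomorphic spray on a holomorphic vector bundle over $Y$. Fix a very ample line bundle on $Y$, giving an embedding $Y\hookrightarrow\CP^N$ and a hyperplane class $\Ocal_Y(1)$. By Serre's theorems, for $d\gg 0$ the coherent sheaf $TY\otimes\Ocal_Y(d)$ is globally generated, so there are sections $V_1,\dots,V_k\in H^0(Y,\,TY\otimes\Ocal_Y(d))$ whose values span $(TY\otimes\Ocal_Y(d))_y$ for every $y\in Y$; equivalently, writing $E:=\Ocal_Y(-d)^{\oplus k}$, the $V_j$ assemble into a surjective holomorphic bundle map $\beta\colon E\to TY$. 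Two standard facts about an Oka manifold $Y$ will be in play: that the total space of any holomorphic vector bundle over $Y$ is again Oka (a holomorphic fibre bundle with Oka fibre $\C^r$ over the Oka base), and that $Y$ satisfies the Oka principle with approximation and interpolation for maps from Stein manifolds, including prescription of finite jets along a closed complex submanifold of the source.

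The problem then reduces to integrating the infinitesimal datum $\beta$: I want a holomorphic map $s\colon E\to Y$ that restricts to the identity on the zero section $E_0\cong Y$ and whose fibrewise differential along $E_0$ is $\beta$, since such an $s$ is automatically a dominating spray. If the total space $E$ were a Stein manifold this would follow at once: the inclusion $E_0=Y\hookrightarrow Y$ extends over $E$ to a continuous map that is holomorphic near $E_0$ and realises the $1$-jet $\beta$ there (a local holomorphic spray near $E_0$ is built from a holomorphic tubular neighbourhood of the Stein submanifold $Y\setminus H\subset\CP^N\setminus H\cong\C^N$, $H$ a hyperplane), and then the Oka principle with jet interpolation promotes it to the desired holomorphic $s$ on all of $E$. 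The obstruction --- and the reason the hypothesis ``projective'', and not merely ``compact'', is the natural one here --- is that $E$ is never Stein, since it contains the compact submanifold $E_0$, so no such global argument is available.

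To get around this I would localise over an affine open cover of $Y$ and then glue. Cover $Y$ by finitely many affine Zariski-open sets $U_0,\dots,U_p$, the complements of hyperplane sections, so that each $E|_{U_i}\cong U_i\times\C^k$ is Stein. Over each $U_i$ the argument of the previous paragraph does apply and yields a local dominating spray $s_i\colon E|_{U_i}\to Y$ with $s_i|_{E_0}=\mathrm{id}$ and fibrewise differential $\beta|_{U_i}$ along $E_0$. It remains to amalgamate the finitely many $s_i$ into one dominating spray on a vector bundle over all of $Y$. As the composition law for sprays is nonlinear, I would linearise: the failure of the $s_i$ to agree over the overlaps $E|_{U_i\cap U_j}$ yields a \v{C}ech $1$-cocycle valued in a sheaf on $Y$ governing the infinitesimal modifications of sprays near $E_0$, and one wants to show that after replacing $E$ by a sufficiently high twist $E\otimes\Ocal_Y(e)$ --- which only enlarges the fibre and costs nothing --- the relevant cohomology vanishes, by Serre vanishing, so that the cocycle splits. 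Correcting the $s_i$ by the resulting coboundary, chosen to vanish to second order along $E_0$ so as not to disturb the $1$-jet, patches them into a global holomorphic $s\colon E\otimes\Ocal_Y(e)\to Y$ that is the identity on the zero section with surjective fibrewise differential there, hence a dominating spray; so $Y$ is elliptic.

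I expect this last step --- amalgamating the local sprays over the \emph{compact} base $Y$ --- to be the main obstacle. Over a Stein or otherwise non-compact base such local data is patched routinely by Cartan-type arguments, but the definition of ellipticity demands an honest vector bundle over the compact $Y$, and the patching forces one to confront a cohomological obstruction that a priori lives in an infinite-dimensional space; identifying the correct coherent sheaf on $Y$, establishing finite-dimensionality, and killing the obstruction after a twist is exactly where projectivity, through an ample line bundle and Serre vanishing, must do the decisive work. I would expect this linearisation-and-vanishing step, rather than any of the individual spray constructions, to be the technical heart of the proof.
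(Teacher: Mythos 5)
Your first step coincides with the paper's: the spray bundle is indeed $E=\Ocal_Y(-d)^{\oplus k}$ (a sum of copies of the dual of a sufficiently ample line bundle), obtained from a surjection $E\to TY$ furnished by Serre's Theorem A, and your local sprays over affine pieces are plausible. The genuine gap is the amalgamation step, and the mechanism you propose for it does not work as stated. The discrepancies between the local sprays $s_i$ and $s_j$ over $E|_{U_i\cap U_j}$ are holomorphic maps on the total space depending on the fibre variable in an arbitrary way; they are not sections of any coherent sheaf on $Y$, so there is no finite-dimensional obstruction group and Serre vanishing (after a twist, which in any case changes the bundle rather than ``enlarging the fibre'') has nothing to act on. Nor is there any smallness that would justify linearising the nonlinear composition of sprays: a \v{C}ech $1$-cocycle argument would require either an iteration scheme with estimates or an additive structure on sprays, neither of which you supply. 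In effect you are assuming a localisation principle for holomorphic ellipticity (Oka--elliptic data on a Zariski-open cover glues to a global dominating spray), which is precisely what is \emph{not} known in the holomorphic category --- such results exist only in the algebraic setting (Gromov) --- so the ``technical heart'' you defer is the whole problem.

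The paper circumvents this in two ways that are absent from your outline. First, the gluing is done at the level of \emph{vector fields} on the total space of the negative bundle, not of sprays: the polynomial fields spanning $TY$ over each affine chart extend to algebraic vector fields on $m\,\U^k$ over all of $\CP^n$, vanishing on the zero section and over the hyperplane at infinity of that chart (Lemma \ref{lem:V}), and since vector fields add linearly one simply sums them and takes the time-$1$ flow to obtain a single local dominating spray $s$ on a neighbourhood of the zero section --- no cohomological patching at all. Second, the Oka hypothesis on $Y$ is used not for maps from the Stein pieces $E|_{U_i}$ but globally: the total space $X=E|_Y$ of the negative bundle is $1$-convex with exceptional set the zero section, and an Oka principle for maps from $1$-convex manifolds to Oka manifolds (Stopar's theorem, fed by Lemma \ref{lem:localspray}, which produces a fibre-dominating spray of maps agreeing with $s$ to high order along $X_0$) extends $s$, together with its $2$-jet along the zero section, to a holomorphic map $\tilde s:X\to Y$; this $\tilde s$ is the desired dominating spray. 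Your proposal never exploits the negativity of $E$ beyond global generation of $TY\otimes\Ocal_Y(d)$, whereas it is exactly this negativity ($1$-convexity of the total space, Stein Remmert reduction) that makes the global use of the Oka property possible.
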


It follows that, for projective manifolds, the Oka property, ellipticity, and subellipticity are equivalent.
The spray bundle on a projective Oka manifold $Y$ that emerges in the proof of the theorem is easily described: it is the direct sum of some number of copies of the dual of a sufficiently ample line bundle on $Y$.  In a suitable embedding of $Y$ into complex projective space, the spray bundle is therefore the direct sum of copies of the universal line bundle.

Theorem \ref{th:elliptic} solves a long-standing open problem, 
originating in Gromov's 1989 paper \cite[3.2.A" Question]{Gromov1989},  
whether every Oka manifold is elliptic;
see also \cite[Problem 6.4.21]{Forstneric2017E},
where the analogous question was posed for subellipticity.
The first counterexamples to both questions for noncompact manifolds
were found only very recently. 
In 2024, Kusakabe showed that the complement
$\C^n\setminus K$ of any compact polynomially convex set 
$K\subset\C^n$ for $n\geq 2$ is an Oka manifold
\cite[Theorem 1.6]{Kusakabe2024AM}. 
A few years earlier it was shown by 
Andrist, Shcherbina, and Wold \cite{AndristShcherbinaWold2016}
that if $K$ is a compact set with nonempty interior
in $\C^n$ for $n\ge 3$, then $\C^n\setminus K$ 
fails to be subelliptic.
Taking $K$ to also be polynomially convex, it follows that 
$\C^n\setminus K$ is Oka but not subelliptic.
These examples are not Stein.  As observed by Gromov, every Stein 
Oka manifold is elliptic \cite[3.2.A, p.\ 879]{Gromov1989}. 

In light of Theorem \ref{th:elliptic}, the main remaining
question on this topic is the following.

\begin{problem}
Is there a compact non-projective Oka manifold that fails
to be elliptic or subelliptic?
\end{problem}

%
% About algebraic ellipticity
%
A much-studied property of algebraic manifolds
is the algebraic version of ellipticity.
A complex algebraic manifold $Y$ is said to be algebraically
elliptic if it admits an algebraic dominating spray $s:E\to Y$
defined on the total space of an algebraic vector bundle
$\pi:E\to Y$; see \cite[Definition 5.6.13 (e)]{Forstneric2017E}.
Similarly, $Y$ is algebraically subelliptic if it admits 
finitely many algebraic sprays $(E_j,\pi_j,s_j)$ satisfying 
\eqref{eqn:domination}. It was recently shown by Kaliman and Zaidenberg
\cite{KalimanZaidenberg2024FM} that every algebraically subelliptic 
manifold is algebraically elliptic; the converse is a tautology. 
Algebraic ellipticity is a Zariski-local condition as shown by 
Gromov \cite[3.5.B, 3.5.C]{Gromov1989}; see also 
\cite[Proposition 6.4.2]{Forstneric2017E}.
No such results are known in the holomorphic category.
A recent result of Banecki \cite{Banecki2024X} is that 
every rational projective manifold is algebraically elliptic.
This generalises the result of Arzhantsev, Kaliman, and Zaidenberg 
\cite[Theorem 1.3]{ArzhantsevKalimanZaidenberg2024}
that every uniformly rational projective manifold is algebraically
elliptic. See also the recent examples of algebraically elliptic
projective manifolds in 
\cite{KalimanZaidenberg2024MRL,KalimanZaidenberg2025}
and the survey \cite{Zaidenberg2024ellipticitysurvey}.
Every algebraically elliptic manifold $Y$ satisfies the algebraic 
homotopy approximation theorem for maps $X\to Y$ from 
affine algebraic manifolds $X$,
showing in particular that every holomorphic map that is homotopic
to an algebraic map is a limit of algebraic maps in the compact-open
topology; see \cite[Theorem 3.1]{Forstneric2006AJM}, 
\cite[Theorem 6.15.1]{Forstneric2017E}, and the recent 
generalisations in \cite[Sect.\ 2]{AlarconForstnericLarusson2024}.
As shown by L\'arusson and Truong \cite{LarussonTruong2019},
this is the closest analogue of the Oka principle in the algebraic
category. However, there are examples of projective Oka manifolds 
that fail to be algebraically elliptic, for example, abelian varieties. 
Hence, the algebraic counterpart
to Theorem \ref{th:elliptic} is not true, and the GAGA principle
of Serre \cite{Serre1955AIF} fails for ellipticity of projective manifolds.

Besides its intrinsic importance in Oka theory, Theorem \ref{th:elliptic}
is interesting for the following reason. For a long time, essentially 
the only known examples of Oka manifolds were the elliptic and subelliptic 
manifolds. Thanks to recent developments, we now have at our disposal 
several other methods to discover Oka manifolds. 
In particular, Kusakabe's localisation theorem
\cite[Theorem 1.4]{Kusakabe2021IUMJ} says that a complex manifold
covered by Zariski-open (in the holomorphic sense)
Oka domains is an Oka manifold.
No such localisation result is available for holomorphic ellipticity or subellipticity, 
so it is interesting that we nevertheless get ellipticity from the Oka property 
in the class of projective manifolds.

%
%   PROOF OF THE MAIN THEOREM
%
\section{Proof of Theorem \ref{th:elliptic}}\label{sec:proof}

Let $Y$ be a projective manifold, embedded in $n$-dimensional complex projective space $\CP^n$.
Let $z=[z_0:z_1:\cdots:z_n]$ be homogeneous coordinates
on $\CP^n$. Set $\Lambda_\alpha=\{z_\alpha=0\}$ for $\alpha=0,1,\ldots,n$, 
and let $U_\alpha=\CP^n\setminus \Lambda_\alpha\cong\C^n$ be the affine
chart with coordinates $(z_0/z_\alpha,\ldots,z_n/z_\alpha)$, 
where the term $z_\alpha/z_\alpha=1$ is omitted. 
Denote the affine coordinates on $U_0$ by $x=(x_1,\ldots,x_n)$,
with $x_i=z_i/z_0$. Let $\di_{x_i} = \di/\di x_i$ denote the coordinate
vector fields on $U_0\cong\C^n$.
Since $Y_0=Y\cap U_0$ is an algebraic submanifold
of $U_0\cong\C^n$, Serre's Theorem A
gives finitely many polynomial vector fields 
\begin{equation}\label{eq:Wj}
	W_j(x) = \sum_{i=1}^n V_{i,j}(x) \di_{x_i},\quad j=1,\ldots,m,
\end{equation} 
on $U_0\cong \C^n$ whose restrictions to $Y_0$ are tangent to $Y_0$ and  
span the tangent space $T_yY$ at every point $y\in Y_0$.
To this collection we associate the polynomial vector field $V$ on the 
total space $U_0\times \C^m\cong \C^{n+m}$
of the trivial vector bundle $\pi:U_0\times \C^m \to U_0$,
defined by
\begin{equation}\label{eq:V}
	V(x,t) = \sum_{i=1}^n \sum_{j=1}^m t_j \, V_{i,j}(x) \di_{x_i},
\end{equation}
where $x\in U_0$ and $t=(t_1,\ldots,t_m) \in \C^m$ is the fibre variable. 
Note that $V$ is horizontal in the sense that its $t$-component 
equals zero. Furthermore, $V$ vanishes on the zero section 
$U_0\times \{0\}^m=\{t=0\}$, 
and for every $(x,t) \in Y_0\times \C^m$ we have that  
\begin{equation}\label{eq:tangent}
	d\pi_{(x,t)} V(x,t) = 
	\sum_{i=1}^n \sum_{j=1}^m t_j\, V_{i,j}(x) \di_{x_i}
	= \sum_{j=1}^m t_j\, W_j(x) \in T_{x}Y.
\end{equation}
The formula without the last inclusion holds for all $(x,t)\in U_0\times \C^m$.

Recall that $\Lambda_0=\CP^n\setminus U_0=\{z_0=0\}$. 
Denote by $\U\to \CP^n$ the universal line bundle. 

\begin{lemma}\label{lem:V}
Let $k_0$ denote the maximum of the degrees of the polynomials $V_{i,j}(x)$.
For every $k\ge k_0$, the vector field $V$
\eqref{eq:V} extends to an algebraic vector field on the total space
of the vector bundle $E=(\CP^n \times \C^m)\otimes \U^k= m\U^k$ on 
$\CP^n$ (the direct sum of $m$ copies of the $k$-th tensor power of 
$\U$), which vanishes on the zero section
$E_0$ of $E$ and on $E|_{\Lambda_0}$. 
\end{lemma}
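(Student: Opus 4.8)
The plan is to understand how the polynomial vector field $V$ on $U_0\times\C^m$ transforms under the change of trivialisation of $\U^k$ over the overlaps $U_0\cap U_\alpha$, and to check that the transition factors exactly cancel the polynomial growth of the coefficients $V_{i,j}(x)$, so that $V$ glues to a global algebraic object. First I would recall that a section of $m\U^k=(\CP^n\times\C^m)\otimes\U^k$ is locally a $\C^m$-valued function, and on the chart $U_\alpha$ the fibre coordinate $t^{(\alpha)}$ is related to $t=t^{(0)}$ on $U_0$ by $t^{(\alpha)} = (z_\alpha/z_0)^k\, t^{(0)} = x_\alpha^{-k} t^{(0)}$ (using $x_\alpha=z_\alpha/z_0$, with $x_0\equiv 1$). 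The horizontal vector field $V(x,t)=\sum_{i,j} t_j V_{i,j}(x)\,\di_{x_i}$ should be viewed as a vector field on the total space $\mathrm{Tot}(E)$, and I would write out its expression in the coordinates of $\mathrm{Tot}(E)$ over $U_\alpha$, namely the affine base coordinates $x^{(\alpha)}_i=z_i/z_\alpha$ and fibre coordinates $t^{(\alpha)}$.

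The key computation is then: under $x_i = x^{(\alpha)}_i/x^{(\alpha)}_0$-type substitutions, each polynomial $V_{i,j}(x)$ of degree $\le k_0\le k$ becomes $x_\alpha^{-k}$ (well, the appropriate power) times a polynomial in the $x^{(\alpha)}$ coordinates — more precisely, $x^{-k}_\alpha$ times something regular, because homogenising a degree-$\le k$ polynomial and dividing by $z_\alpha^k$ gives a regular function on $U_\alpha$. Combined with the base change of the $\di_{x_i}$ vector fields (which introduces its own powers of $x_\alpha$) and the factor $x_\alpha^k$ coming from expressing $t^{(0)}=x_\alpha^k t^{(\alpha)}$, all the negative powers of $x_\alpha$ must cancel, leaving an algebraic (polynomial, in the affine coordinates on $\mathrm{Tot}(E|_{U_\alpha})$) vector field. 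I would do this bookkeeping carefully for a single representative chart $U_\alpha$, $\alpha\ne 0$, since the general case is identical. The main obstacle — and the step deserving the most care — is precisely this index-juggling with the powers of the coordinate functions: making sure the degree bound $k\ge k_0$ is exactly what is needed, that the $\di_{x_i}\to\di_{x^{(\alpha)}_j}$ transformation contributes the expected powers, and that the fibre coordinate scaling enters with the correct sign of exponent so that everything is regular across $\Lambda_0\cap U_\alpha$.

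Once regularity across each overlap is established, the vanishing assertions are immediate. On the zero section $E_0$ we have $t=0$, and $V(x,0)=0$ from \eqref{eq:V}, so in any chart the extended field vanishes on $E_0$ since vanishing on a dense open subset of the (connected) zero section forces vanishing everywhere on it by continuity of the algebraic extension. For vanishing on $E|_{\Lambda_0}$: in the chart $U_\alpha$ the locus $\Lambda_0=\{z_0=0\}$ is $\{x^{(\alpha)}_0=0\}$, and I expect that after the transformation the coefficients of the extended vector field each carry a positive power of $x^{(\alpha)}_0$ — this power is $k-\deg_{x}V_{i,j}\ge k-k_0\ge 0$ from the homogenisation, plus possibly an extra contribution from the chain rule on $\di_{x_i}$; I would verify that the total exponent of $x^{(\alpha)}_0$ is strictly positive on every coefficient, which gives vanishing on $E|_{\Lambda_0}$. (Here one should note that the horizontal nature of $V$ — its $t$-component is zero — is preserved, so only the $\di_{x_i}$-coefficients need to be examined.) Finally, by construction the glued field restricts to $V$ on $\mathrm{Tot}(E|_{U_0})=U_0\times\C^m$, completing the proof.
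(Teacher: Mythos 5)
Your strategy is the same as the paper's: compute the field chart by chart and check that the order-$k$ transition factor of $\U^k$ cancels the poles (of order at most $k_0\le k$) that the coefficients $V_{i,j}$ acquire along $\Lambda_0$, with the base change of the $\di_{x_i}$ supplying the extra positive powers needed for vanishing on $E|_{\Lambda_0}$. But there is a genuine error: your parenthetical claim that the horizontal nature of $V$ is preserved, so that only the $\di_{x_i}$-coefficients need to be examined. It is not. The trivialisation change is fibrewise linear but depends on the base point, so its differential has a nonzero off-diagonal block, and pushing $V$ forward produces a vertical component. Over $U_1$, with $t'=x_1^k t$, one gets besides the horizontal part the term $k\,x_1^{-k-1}\sum_{j,l=1}^m t'_j t'_l V_{1,j}(x)\,\di_{t'_l}$, quadratic in $t'$. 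This term must also be shown to be regular across $\{z_0=0\}$ and to vanish on $E|_{\Lambda_0}$ and on the zero section; it does, since after passing to the affine coordinates $x'$ on $U_1$ its coefficient carries $x_1^{-k-1}=(x'_1)^{k+1}$ against poles of order at most $k_0$, hence a power at least $k+1-k_0\ge 1$ of $x'_1$, and it is quadratic in $t'$ --- but your proposal omits this check entirely and in fact asserts the term is absent.

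Moreover, the exponent bookkeeping that you yourself single out as the decisive step is stated backwards. From the transition $\theta_{\alpha,0}([z],t)=([z],(z_\alpha/z_0)^k t)$ of the universal bundle one has $t^{(\alpha)}=x_\alpha^{k}\,t^{(0)}$ with $x_\alpha=z_\alpha/z_0$, i.e.\ $t^{(0)}=(z_0/z_\alpha)^k\,t^{(\alpha)}$, and it is exactly this factor --- the $k$-th power of the local defining function of $\Lambda_0$ on $U_\alpha$ --- that cancels the order-$\le k_0$ poles of the coefficients and, together with the powers from the base change, yields vanishing on $E|_{\Lambda_0}$. You write the opposite relation ($t^{(\alpha)}=x_\alpha^{-k}t^{(0)}$, $t^{(0)}=x_\alpha^{k}t^{(\alpha)}$), which is the transformation law of the dual (positive) bundle; if one runs the computation with that law, the poles get worse instead of cancelling and the extension fails. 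Since you flag the sign of this exponent as the point deserving most care and then leave it unresolved (indeed inverted), and since the vertical component is missing altogether, the proposal identifies the right mechanism but does not yet constitute a proof.
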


\begin{proof}
For every $\alpha=0,1,\ldots,n$ we have a vector bundle trivialisation 
$\theta_\alpha:E|U_\alpha\stackrel{\cong}{\to} U_\alpha\times \C^m$ 
with transition maps 
$\theta_{\alpha,\beta}=\theta_\alpha\circ \theta_\beta^{-1}$ on 
$(U_\alpha\cap U_\beta) \times \C^m$ given by 
\begin{equation}\label{eq:transition}
	\theta_{\alpha,\beta}([z],t) = \bigl([z],(z_\alpha/z_\beta)^k t\bigr),\quad 
	t\in\C^m, \ 0\le \alpha,\beta\le n.
\end{equation}
In particular, $\theta_{\alpha,0}([z],t)=\bigl([z],(z_\alpha/z_0)^k t\bigr)$.
We analyse the behaviour of the vector field $V$ \eqref{eq:V} 
near the hyperplane
$\Lambda_0\setminus \Lambda_\alpha$ for $\alpha=1,\ldots,n$. 
It suffices to consider the case $\alpha=1$ since the same argument 
will apply to every $\alpha$. 
For simplicity we first make the calculation in the case 
when $m=1$, so $E=\U^k$. The calculation is made in two steps. 
In the first step, we express $V$ in the fibre coordinate $t'$ on the 
line bundle chart $E|{U_1}\cong \C^n\times \C$ over the domain
$U_0\cap U_1 =\{x=(x_1,\ldots,x_n)\in \C^n: x_1\ne 0\}$.
The transition map $\theta_{1,0}$ is given by
$\theta_{1,0}(x,t)=\bigl(x,x_1^{k} t\bigr)$, so $t'=x_1^{k} t$. 
Its differential has the block form
\begin{equation}\label{eq:Dtheta}
	D\theta_{1,0}(x,t)=
	\begin{pmatrix}
	I_{n} & 0 \\
	B      & x_1^{k} 
	\end{pmatrix}
\end{equation}
where $I_n$ is the $n\times n$ identity matrix and
$B=(kx_1^{k-1} t,0,\ldots,0)$. It follows that the vector field
$V'=D\theta_{1,0} \, \cdotp V$ equals
\begin{align*} 
	V' &= t\sum_{i=1}^n V_i(x)\di_{x_i} + k t^2 x_1^{k-1}V_1(x) \di_{t'} \\
	   & = t' x_1^{-k}\sum_{i=1}^n V_i(x)\di_{x_i} + 
		k (t')^2 x_1^{-k-1} V_1(x) \di_{t'},
\end{align*} 
where we used that $t=x_1^{-k}t'$.
In the second step, we express the vector field $V'$ in the standard affine
coordinates $x'=(x'_1,x'_2,\ldots,x'_n)$ on $U_1$. Note that
\begin{align*}
	x'_1 &= \frac{z_0}{z_1} = \frac{1}{x_1},\\
	x'_i  &= \frac{z_i}{z_1} = \frac{x_i}{x_1},\quad i=2,\ldots,n. 	
\end{align*}
Write $x'=\psi(x)$ and $(x',t')=\tilde \psi(x,t')=(\psi(x),t')$. 
We have that 
\[
	D\psi(x)=
	\begin{pmatrix}
	-\frac{1}{x_1^2} & 0 & 0 & \cdots & 0 \\
	-\frac{x_2}{x_1^2} & \frac{1}{x_1} & 0 & \cdots & 0 \\
	\cdots & \cdots & \cdots & \cdots & \cdots  \\
	-\frac{x_n}{x_1^2} & 0 & 0 & \cdots & \frac{1}{x_1}
	\end{pmatrix}.
\]
Hence, the vector field $\wt V=D\tilde\psi\,\cdotp V'$ equals
\begin{align*}
	\wt V(x,t') &= -\frac{1}{x_1^{k+2}} V_1(x) t' \di_{x'_1} 
 	+\sum_{i=2}^n \left[ - \frac{x_i}{x_1^{k+2}} V_1(x) 
	+ \frac{1}{x_1^{k+1}} V_i(x)\right] t' \di_{x'_i} \\
	&\quad + \frac{k}{x_1^{k+1}} V_1(x) (t')^2 \di_{t'}. 
\end{align*}	
Note that $x=\psi^{-1}(x')=(1/x'_1,x'_2/x'_1,\cdots,x'_n/x'_1)$.
Inserting this in the above expression gives  
\begin{align*}\label{eq:Vtilde}
	\wt V(x',t') &= -(x'_1)^{k+2} V_1(\psi^{-1}(x')) \, t' \di_{x'_1} \\ 
	&\quad	+\sum_{i=2}^n (x'_1)^{k+1}  \left[ - x'_i V_1(\psi^{-1}(x')) 
		+ V_i(\psi^{-1}(x')) \right]  t' \di_{x'_i} 	\\
	&\quad + k (x'_1)^{k+1} V_1(\psi^{-1}(x')) (t')^2 \di_{t'}.
\end{align*}
Note that the affine hyperplane $\{z_0=0,\ z_1\ne 0\}$ 
corresponds to $\{x'_1=0\}$. 
Since $\psi^{-1}$ is a fractional linear map with a simple
pole along $x'_1=0$, the functions $V_i(\psi^{-1}(x'))$ are rational
in $x'$ with a pole of degree at most $k_0$ along $x'_1=0$ 
and no other singularities. It follows from the above expression 
that for $k\ge k_0$ the vector field $\wt V(x',t')$ is polynomial in $(x',t')$
and it vanishes on $\{x'_1=0\}\cup\{t'=0\}$.

In the general case when $V$ is given by \eqref{eq:V} and $m\in\N$
is arbitrary, the calculation is similar. Using the fibre
variables $t=(t_1,\ldots,t_m)$ on $E|U_0$ and
$t'=(t'_1,\ldots,t'_m)=x_1^k t$ on $E|U_1$, 
the differential $D\theta_{0,1}$ has the block form~\eqref{eq:Dtheta},
where $B$ is now an $m\times n$ matrix and the lower right 
entry is replaced by $x_1^{k}I_m$. The vector field 
$V'=D\theta_{1,0} \, \cdotp V$ equals
\begin{align*}%\label{eq:Vprime}
	V' &= \sum_{i=1}^n \sum_{j=1}^m t_j\, V_{i,j}(x) \di_{x_i} 
	  	 + k x_1^{k-1} \sum_{j,l=1}^m t_jt_l V_{1,j}(x) \di_{t'_l} \\
	   &= x_1^{-k} \sum_{i=1}^n \sum_{j=1}^m t'_j\, V_{i,j}(x) \di_{x_i} 
	   	 + k x_1^{-k-1} \sum_{j,l=1}^m t'_j t'_l V_{1,j}(x) \di_{t'_l}.
\end{align*}
The second step, expressing $V'$ in the affine coordinates $x'$
on $U_1$, is the same as before, and we leave 
the details to the reader. The new vector field 
$\wt V=D\tilde\psi\,\cdotp V'$ 
is polynomial in $(x',t')$, of second order in $t'$, and   
it vanishes on $\{x'_1=0\}\cup\{t'=0\}$.
Since this argument holds on every chart $U_\alpha$ for 
$\alpha=1,\ldots,n$, the lemma is proved.
\end{proof}

Since the extended vector field $V$ on $E$, given by Lemma \ref{lem:V}, 
vanishes on the zero section $E_0$ of $E$, 
there is a neighbourhood $\Omega \subset E$ of $E_0$ with convex fibres
such that the flow $\phi_\tau(e)$ of $V$, starting at time $\tau=0$ in 
any point $e\in \Omega$, exists for all $\tau \in [0,1]$. The map 
\[
	s=\pi\circ \phi_1:\Omega \to \CP^n
\]
is then a local holomorphic spray on $\CP^n$.
On the zero section $E_0\cong \CP^n$ we have 
a natural splitting $TE|E_0=E \oplus T\CP^n$.
Identifying a vector $e\in E_x=\pi^{-1}(x)$ with $e\in T_{0_x} E_x$, we let 
\[
	(V\!ds)_{x}(e)=(ds)_{0_x}(e)\in T_x \CP^n
\]
denote the vertical derivative of $s$ at $x\in \CP^n$ applied to the vector $e$.
We claim that for every $e=(x,t) \in \Omega$, with $x\in U_0$, we have 
\begin{equation}\label{eq:Vds}
	(V\!ds)_{x}(t_1,\ldots,t_m)= \sum_{j=1}^m t_j\, W_j(x).
\end{equation}
To see this, note that in the vector bundle chart on $E|U_0$ the 
vector field $V$ is of the form \eqref{eq:V},
that is, it is horizontal and its coefficients are linear in the fibre variable $t$.
It follows that 
\begin{equation}\label{eq:linear}
	\pi\circ \phi_\tau(x, \delta t) = \pi\circ \phi_{\delta \tau}(x,t)
\end{equation}
holds for every $(x,t)\in E|U_0$, $0\le \delta \le 1$, and all $\tau$ for which 
the flow exists. Taking $(x,t) \in \Omega$, this holds for all $\tau\in [0,1]$.
At $\tau=1$ we obtain 
\[
	s(x,\delta t) = \pi \circ \phi_1(x,\delta t) = 
	\pi \circ \phi_\delta(x,t),
	\quad 0\le \delta\le 1. 
\]
Differentiating with respect to $\delta$ at $\delta=0$ and noting that 
$\dfrac{d}{d\delta}\Big|_{\delta =0}\phi_\delta(x,t) = V(x,t)$
and $d\pi_{(x,t)} V(x,t) = \sum_{j=1}^m t_j\, W_j(x)$ (see \eqref{eq:tangent})
gives \eqref{eq:Vds}.

Set $E|Y=\pi^{-1}(Y)$. Condition \eqref{eq:tangent} implies that the spray
$s=\pi\circ \phi_1$ maps the domain $\Omega \cap E|Y$ to $Y$, 
so it is a local holomorphic spray on $Y$. Since the vector fields 
$W_1,\ldots,W_m$ generate the tangent space $T_x Y$ every point 
$x\in Y_0 = Y\cap U_0$, we see from \eqref{eq:Vds} that 
the restricted spray $s:\Omega \cap E|Y\to Y$ is dominating on $Y_0$.
On the other hand, since $V$ vanishes on $E|\Lambda_0$, 
$\phi_1$ is the identity on this set and the 
spray $s=\pi$ is trivial over $\Lambda_0$.

In order to find a local dominating spray on $Y$, we proceed as follows.
For $\alpha\in\{0,1,\ldots,n\}$ set $Y_\alpha=Y\cap U_\alpha$;
this is an algebraic submanifold of $U_\alpha\cong\C^n$. 
Choose $m\in\N$ big enough that the tangent bundle $TY_\alpha$
is pointwise generated by $m$ polynomial vector fields 
$W^\alpha_j$ of the form \eqref{eq:Wj} on $U_\alpha$ for every 
$\alpha\in\{0,1,\ldots,n\}$. In the affine coordinates
$x=(x_1,\ldots,x_n)=(z_0/z_\alpha,\ldots,z_n/z_\alpha)$ on $U_\alpha$ 
we have $W^\alpha_j(x) = \sum_{i=1}^n V^\alpha_{i,j}(x) \di_{x_i}$
where $V^\alpha_{i,j}$ are polynomials. Let 
\begin{equation}\label{eq:k0}
	k_0 := \max_{\alpha,i,j} \deg V^\alpha_{i,j}. 
\end{equation}
For every $k\ge k_0$ the above argument gives an algebraic vector field
$V^\alpha$ on the vector bundle $E^\alpha=m\U^k$ 
that vanishes on the zero section $E^\alpha_0$ and is 
of the form \eqref{eq:V} in the 
chart $E^\alpha|U_\alpha\cong U_\alpha\times \C^m$.
Explicitly, in the affine coordinates
$x=(z_0/z_\alpha,\ldots,z_n/z_\alpha)$ on $U_\alpha$
and fibre coordinates $t^\alpha=(t^\alpha_1,\ldots,t^\alpha_m)$ 
on $E^\alpha|U_\alpha$ we have 
\[ 
	V^\alpha(x,t^\alpha) = 
	\sum_{i=1}^n \sum_{j=1}^m t^\alpha_j \, V^\alpha_{i,j}(x) \di_{x_i}.
\] 
We can take $V^0$ to be the vector field $V$ in \eqref{eq:V}.

Let $E=E^0\oplus E^1\oplus \cdots\oplus E^n=(n+1)m\U^k$ 
and denote the vector bundle projection by $\pi:E \to \CP^n$.
The algebraic vector field $V^\alpha$ on $E^\alpha$ constructed above
can be extended to an algebraic vector field on $E$ by first extending
it trivially (horizontally) to each of the summands $E^\beta|U_\alpha$
of $E|U_\alpha$ for $\beta\ne \alpha$ (note that these are trivial bundles), 
and then observing that the resulting vector field on $E|U_\alpha$ 
extends to an algebraic vector field on $E$ taking into account the condition \eqref{eq:k0} (see Lemma \ref{lem:V}). With these extensions in place, 
we consider the vector field $V=\sum_{\alpha=0}^n V^\alpha$ on $E$.
The construction implies that
\[ 
	d\pi_e V(e)\in T_yY\quad 
	\text{for every $y\in Y$ and $e\in E_y=\pi^{-1}(y)$}.
\] 
Since each $V^\alpha$ vanishes on the zero section
of $E_0$ of $E$, so does $V$. 
Hence, there is a neighbourhood $\Omega\subset E$
of $E_0$ with convex fibres such that the flow $\phi_\tau(e)$ of $V$ 
exists for any initial point $e\in \Omega$ and every $\tau\in[0,1]$.
Consider the holomorphic spray 
\begin{equation}\label{eq:localspray}
	s=\pi \circ \phi_1:\Omega\to \CP^n. 
\end{equation}
We claim that $s:\Omega\cap \pi^{-1}(Y)\to Y$ is dominating. 
To see this, consider the vector field $V=\sum_{\alpha=0}^n V^\alpha$ 
on a chart $E|U_\beta$.
For simplicity of notation we assume that $\beta=0$;
the argument will be the same in every case. In the affine coordinates 
$x=(z_1/z_0,\ldots,z_n/z_0)$ on $U_0$ and fibre coordinates
$t=(t^0,t^1,\ldots,t^n)$ on $E|U_0$, where 
$t^\alpha = (t^\alpha_1,\ldots,t^\alpha_m)$ are 
fibre coordinates on the direct summand $E^\alpha|U_0$ 
of $E|U_0$, we see as above 
%from \eqref{eq:Vprime} and \eqref{eq:Vtilde} 
that 
\begin{equation}\label{eq:Vsum}
	V(x,t) = \sum_{\alpha=0}^n \sum_{i=1}^n 
	\sum_{j=1}^m t^\alpha_j \, \wt V^\alpha_{i,j}(x) \di_{x_i}
	+ \Upsilon(x,t)
	= \Theta(x,t)+\Upsilon(x,t)
\end{equation}
where each $\wt V^\alpha_{i,j}(x)$ is a polynomial, 
$\wt V^0_{i,j}(x)=V^0_{i,j}(x)$ for $i=1,\ldots,n$ and $j=1,\ldots,m$,  
and the vertical component $\Upsilon$ of $V$ vanishes to the second 
order along the zero section $E_0=\{t=0\}$, that is, $|\Upsilon(x,t)|=O(|t|^2)$. 
Since the vector field $\Theta(x,t)$ in \eqref{eq:Vsum}
is linear in the fibre variable $t$, its flow $\psi_\tau$ satisfies 
$\pi\circ \psi_\tau(x, \delta t) = \pi\circ \psi_{\delta \tau}(x,t)$
(cf.\ \eqref{eq:linear}). As before, it follows that the vertical
derivative of the spray 
$\tilde s=\pi\circ\psi_1:\Omega\to \CP^n$ over $U_0$ equals 
\[ 
	(V\! d\tilde s)_{x}(x,t)= 
	\sum_{\alpha=0}^n \sum_{j=1}^m t^\alpha_j\, \wt W^\alpha_j(x),
\] 
where each $\wt W^\alpha_j(x)$ is a polynomial vector field 
tangent to $Y_0$ and $\wt W^0_j=W^0_j$ for $j=1,\ldots,m$. 
(Compare with \eqref{eq:Vds}.) Since the vectors 
$W^0_j(x)$ for $j=1,\ldots,m$ 
span $T_xY$ for every $x\in Y_0$, the spray 
$\tilde s$ is dominating over $Y_0$. Since the second term
$\Upsilon$ in \eqref{eq:Vsum} is of size $O(|t|^2)$, 
a standard argument using Gr\"onwall's inequality
shows that the flow $\phi_\tau$ of $V$ satisfies 
\[
	\phi_\tau(x,t) = \psi_\tau(x,t)+O(|t|^2)\ \ \text{as $|t|\to 0$ and 
	$\tau\in [0,1]$}.
\]
It follows that the spray $s=\pi\circ \phi_1:\Omega\to\CP^n$
\eqref{eq:localspray} satisfies $(V\! ds)_{x}(x,t)=(V\! d\tilde s)_{x}(x,t)$ 
for $x\in U_0$. The same argument holds on every chart $E|U_\beta$.

This proves that the spray $s:\Omega\cap \pi^{-1}(Y)\to Y$ 
is dominating as claimed. 

%
% Franc: The remainder of this section was changed on 2.9.2025
%  
We now replace the bundle $E\to \CP^n$ 
by its restriction $X:=E|Y \to Y$ and the spray $s$ 
\eqref{eq:localspray} by its restriction $X \cap \Omega\to Y$.
Since $X$ is a direct sum of copies 
of the negative line bundle $\U^k|Y$, it is a 1-convex manifold 
with the zero section $X_0\cong Y$ as the exceptional subvariety; 
that is, $X_0$ is the maximal compact complex subvariety 
of $X$ without point components;
see Grauert \cite[Satz 1, p.\ 341]{Grauert1962MA}.
Such $X$ admits a plurisubharmonic exhaustion function
$\rho:X\to[0,\infty)$ which vanishes on $X_0$ and is positive 
strongly plurisubharmonic on $X\setminus X_0$, and the Remmert
reduction of $X$ is a Stein space. 

So far, we have not used the hypothesis that $Y$ is an Oka manifold.
Under this additional assumption, we shall prove that there
exists a holomorphic spray $\tilde s: X \to Y$ which agrees
with $s$ to the second order along the zero section $X_0\cong Y$
of $X$. Clearly, $\tilde s$ is then a dominating spray on $Y$, 
thereby proving Theorem \ref{th:elliptic}.

Since $X$ is a 1-convex manifold and $Y$ is an Oka manifold,
the existence of such $\tilde s$ follows from 
the main result of Prezelj \cite{Prezelj2010} if taken at face value; 
see also \cite{Prezelj2016} which clarifies 
a part of the construction in \cite{Prezelj2010}. 
However, it was brought to our attention that one of the 
technical tools, \cite[Theorem 2.4]{Prezelj2010},  
claims more than had been proved in the literature,
and possibly there exists a counterexample to this statement.
(This is quoted from Grauert's paper \cite{Grauert1962MA} 
but certain conditions in his result were not taken into account.) 
For this reason, we proceed in a different way. 
The main point is to prove the following lemma.

%
%  EXISTENCE OF A LOCAL DOMINATING SPRAY 
%
\begin{lemma}\label{lem:localspray}
(Assumptions as above.) Let $r\ge 2$ be an integer.
Assume that $U\subset X$ is an open neighbourhood of 
the zero section $X_0\cong Y$ of $\pi:X\to Y$ 
and $s:U\to Y$ is a local dominating holomorphic spray.
There are an open neighbourhood $U'\subset U$ 
of $X_0$, a ball $0\in B\subset \C^N$ for some $N\in\N$, 
%$N\ge \dim X+\dim Y$, 
and a holomorphic spray of maps
$S:U'\times B \to Y$ with the core $S(\cdotp,0)=s|U'$
such that $S$ agrees with $s$ to order $r$ along $X_0$ for every 
$\zeta\in B$, and $S$ is dominating over $U'\setminus X_0$
with respect to the variable $\zeta \in B$.
\end{lemma}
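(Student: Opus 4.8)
The plan is to construct $S$ explicitly, with no appeal to the Oka principle: take $S=s\circ\Phi$, where $\Phi\colon U'\times B\to U$ is the time-$1$ flow of a generic linear combination of finitely many holomorphic vector fields on $X$ that vanish to order $r$ along $X_0$ and jointly span $T_eX$ at every point $e\in X\setminus X_0$. Composition with $s$ then automatically produces a family agreeing with $s$ to order $r$ along $X_0$, while domination over $U'\setminus X_0$ follows at once from the fact that the spray $s$ is a submersion near $X_0$.

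\emph{Step 1: a spanning family of vector fields vanishing to order $r$ along $X_0$.} Recall $X=E|Y$ with $E=(n+1)m\,\U^k$ and $X_0\cong Y$ the zero section; write $M=(n+1)m$ for the rank. For each $\alpha\in\{0,\dots,n\}$ let $W^\alpha_j=\sum_i V^\alpha_{i,j}\,\di_{x_i}$ be the polynomial vector fields of the form \eqref{eq:Wj} spanning $TY_\alpha$ pointwise, and let $t^\alpha=(t^\alpha_1,\dots,t^\alpha_M)$ be fibre coordinates on $E|U_\alpha\cong U_\alpha\times\C^M$. On $E|U_\alpha$ consider the horizontal vector fields $(t^\alpha_i)^r W^\alpha_j$ and the vertical vector fields $(t^\alpha_i)^r\,\di_{t^\alpha_{i'}}$: each is tangent to $E|Y$ and vanishes to order $r$ along $X_0$. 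The same transition computation as in Lemma \ref{lem:V}, now even easier because the extra factor $(t^\alpha_i)^r$ contributes a further positive power of the local defining function of the hyperplane at infinity, shows that for $k\ge k_0$ (with $k_0$ as in \eqref{eq:k0}) these extend to algebraic vector fields on $E$, hence restrict to global holomorphic vector fields on $X$ vanishing to order $r$ along $X_0$. Call the resulting finite family $\xi_1,\dots,\xi_p$. If $e\in X\setminus X_0$, choose $\alpha$ with $e\in E|U_\alpha$; in that trivialization $e=(x,t^\alpha)$ with $t^\alpha\ne 0$, so $t^\alpha_i(e)\ne 0$ for some $i$, and then the $\xi_\nu(e)$ include a nonzero multiple of every $W^\alpha_j(x)$ and of every $\di_{t^\alpha_{i'}}$; hence $\span_\C\{\xi_1(e),\dots,\xi_p(e)\}=T_eX$. (Alternatively: since $X$ is $1$-convex with exceptional set $X_0$ and Stein Remmert reduction, the global sections of the coherent sheaf $TX\otimes\Ical_{X_0}^{\,r}$ generate it over $X\setminus X_0$, which gives the same spanning property; but the explicit description stays within the algebraic category used above.)

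\emph{Step 2: the flow and the conclusion.} Set $Z_\zeta=\sum_{\nu=1}^p\zeta_\nu\xi_\nu$; it is linear in $\zeta\in\C^p$ and vanishes identically at $\zeta=0$. Since $X_0$ is compact, replace $U$ by a relatively compact neighbourhood $U'\Subset U$ of $X_0$ and choose $\varepsilon>0$ so small that for $\zeta$ in the ball $B_0=\{|\zeta|<\varepsilon\}$ the time-$1$ flow $\Phi(\cdot,\zeta)=\phi^{Z_\zeta}_1$ is defined on $U'$, maps $U'$ into $U$, and depends holomorphically on $(e,\zeta)$; moreover $\Phi(\cdot,0)=\mathrm{id}$ and $\di_\zeta\Phi(e,0)$ is the map $\zeta\mapsto\sum_\nu\zeta_\nu\xi_\nu(e)$. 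Because each $\xi_\nu$ vanishes to order $r$ along $X_0$, the submanifold $X_0$ is pointwise fixed by $\Phi(\cdot,\zeta)$, and a routine Gr\"onwall estimate gives $\Phi(e,\zeta)-e=O\bigl(\dist(e,X_0)^r\bigr)$, so $\Phi(\cdot,\zeta)$ agrees with the identity to order $r$ along $X_0$ for every $\zeta\in B_0$. Put $S:=s\circ\Phi\colon U'\times B_0\to Y$. Then $S$ is holomorphic, $S(\cdot,0)=s|U'$ is the prescribed core, and $S(\cdot,\zeta)=s\circ\Phi(\cdot,\zeta)$ has the same jet along $X_0$ as $s\circ\mathrm{id}=s$, so $S$ agrees with $s$ to order $r$ along $X_0$. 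Finally, since a local spray restricts to the identity on its zero section, $s|X_0$ is the isomorphism $X_0\cong Y$, whence $ds_e$ is surjective for all $e$ in a neighbourhood of $X_0$; shrinking $U'$ once more we may assume this on all of $U'$. For $e\in U'\setminus X_0$,
\[
 \di_\zeta S(e,0)(\zeta)=ds_e\Bigl(\sum_{\nu=1}^p\zeta_\nu\,\xi_\nu(e)\Bigr),
\]
and its image is $ds_e\bigl(\span_\C\{\xi_\nu(e)\}\bigr)=ds_e(T_eX)=T_{s(e)}Y$ by Step 1; thus $S$ is dominating over $U'\setminus X_0$ with respect to $\zeta$. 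If $p<\dim X+\dim Y$, enlarge the parameter space to $\C^N$ with $N=\dim X+\dim Y$, letting $S$ be independent of the new variables, and let $B\subset\C^N$ be a small ball containing $0$; all the stated properties persist.

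\emph{Main obstacle.} The only point requiring real work is Step 1 — producing \emph{global} holomorphic vector fields on $X$ that vanish to order $r$ along $X_0$ and yet span $T_eX$ everywhere off $X_0$. The explicit candidates $(t^\alpha_i)^r W^\alpha_j$ and $(t^\alpha_i)^r\di_{t^\alpha_{i'}}$ make the spanning property free; what must be checked is that they extend across the hyperplanes at infinity, which is exactly the bookkeeping already carried out in Lemma \ref{lem:V}, with room to spare because of the extra power of the fibre coordinate. Everything else is soft. I note that the Oka hypothesis on $Y$ is not used in this lemma; it will be needed only afterwards, when the spray of maps $S$ is fed into a Cartan-type argument on the $1$-convex manifold $X$ to produce the global dominating spray $\tilde s\colon X\to Y$.
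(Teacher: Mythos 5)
Your proof is correct, and it reaches the lemma by a route that is parallel in spirit to the paper's but different in execution. The common core is precomposition: both arguments build $S$ as $s$ composed with a $\zeta$-family of self-maps of $X$ that fix $X_0$ to order $r$, with the factor $(t_i)^r$ doing double duty — it forces order-$r$ agreement with the identity along the zero section and, because $\U^k$ is negative, it supplies the positive powers of the defining function of the hyperplane at infinity needed to extend across the other charts. The paper implements this with explicit fibrewise shears $\phi(x,t,\zeta)=(x,t+\zeta t_1^r v)$, for which the extension check is a one-line computation with the transition functions \eqref{eq:transition}, the jet agreement is immediate (no ODEs), and domination is obtained from the domination of $s$ (its vertical derivative \eqref{eq:Vds}) at points of $X_0$ plus continuity, at the cost of composing finitely many such shears to remove the exceptional sets $\{t_j=0\}$ and to cover all charts. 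You instead build a global family of holomorphic vector fields on $X$ vanishing to order $r$ along $X_0$ and spanning $T_eX$ off $X_0$, flow them, and use surjectivity of the full differential $ds_e$ near $X_0$ (which already follows from $s|_{X_0}=\mathrm{id}$, so the domination of $s$ is not even needed at this point); this buys a cleaner one-shot domination statement with no exceptional hyperplanes, at the cost of (i) redoing the Lemma \ref{lem:V}-type transition computation for the horizontal fields $(t^\alpha_i)^rW^\alpha_j$ — which does go through, with room to spare, exactly as you say — and (ii) an ODE/Gr\"onwall argument to see that the time-one flow agrees with the identity to order $r$ along $X_0$ and depends holomorphically on $(e,\zeta)$ (the $O(\dist(\cdot,X_0)^r)$ bound does give membership in $\Ical_{X_0}^{\,r}$ by Cauchy estimates, so this step is fine, just not free). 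Two small remarks: the horizontal fields are actually superfluous — the vertical fields $(t^\alpha_i)^r\di_{t^\alpha_{i'}}$ alone span the vertical subbundle off $X_0$, and $ds_e$ restricted to it is surjective near $X_0$ by domination of $s$, which would make your Step 1 essentially the infinitesimal version of the paper's shears; and your handling of the requirement $N\ge\dim X+\dim Y$ by adding dummy parameters is legitimate, since that bound is only needed later for the approximation clause in Stopar's theorem. Your closing observation that the Oka hypothesis on $Y$ plays no role in this lemma agrees with the paper.
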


More precisely, the last claim is that for every point 
$x\in U'\setminus X_0$ the differential of the map 
$B\ni \zeta \mapsto S(x,\zeta)\in Y$ at $\zeta =0$
maps $T_0 B =\C^N$ onto $T_{s(x)}Y$.
(Note that $S(x,0)=s(x)$.) 
If Lemma \ref{lem:localspray} holds then  
the existence of a holomorphic map $\tilde s:X\to Y$ which agrees
with $s$ on $X_0$ to order $r$ follows from 
\cite[Theorem 3.3]{Stopar2013} by Stopar. 
Indeed, Lemma \ref{lem:localspray} shows that 
every map $s:U\to Y$ as in the lemma
satisfies Condition $\Ecal$ in \cite[p.\ 4]{Stopar2013}. 
Assuming as we may that the domain $U\Subset X$ is 
relatively compact with smooth strongly pseudoconvex boundary, 
\cite[Theorem 2.5]{Stopar2013} gives  
a holomorphic spray $\wt S:U\times B\to Y$ with the
core $\wt S(\cdotp,0)=s$ which is dominating over $U\setminus X_0$ 
with respect to the $\zeta$ variable and such that 
$\wt S(\cdotp,\zeta)$ agrees with $s$ to order $r$ 
along $X_0$ for every $\zeta\in B$. 
(Assuming that $N\ge \dim X+\dim Y$, $\wt S$ may be chosen to approximate the spray $S$ in the hypotheses of the lemma over a neigbourhood of $X_0$.) 
The existence of a local dominating spray $\wt S$ with these properties was 
the main technical issue addressed in \cite{Prezelj2010,Prezelj2016}. 
With this result in hand, one can apply the usual inductive procedure 
in Oka theory, adapted to 1-convex manifolds, 
to construct a global holomorphic map $\tilde s:X \to Y$ which agrees 
with $s$ to order $r$ along $X_0$. See \cite[Theorem 3.3]{Stopar2013}
and its proof for the details. (The cited result applies 
to sections of any holomorphic fibre bundle with an Oka fibre over $X$,
but we apply it to maps $X\to Y$ identified with sections of the 
trivial bundle $X\times Y\to X$.)

\begin{proof}[Proof of Lemma \ref{lem:localspray}]
The idea is to precompose the map $s:U\to Y$ by a suitably 
chosen spray of holomorphic fibre preserving 
self-maps of the vector bundle $\pi:X\to Y$. 
Recall that this bundle is trivial over 
each domain $Y\cap U_j$ for $j=0,1,\ldots,n$, where 
$U_j\subset\CP^n$ are the standard affine charts.
We explain the construction over $Y_0=Y\cap U_0$; it will
apply by symmetry to all other charts.
Let $x=(z_1/z_0,\ldots,z_n/z_0)$ be 
the affine coordinates on $U_0$ and $t=(t_1,\ldots,t_{l})$
the fibre coordinates on the vector bundle chart 
$X|Y_0\cong Y_0\times \C^{l}$ with $l=m(n+1)$. 
Let $r \ge 2$ be as in the lemma. 
Choose $v\in \C^{l}$ %and an integer 
and consider the holomorphic map 
$\phi:Y_0 \times \C^l \times \C \to Y_0 \times \C^l$ given by
\begin{equation}\label{eq:phi}
	\phi(x,t,\zeta) = (x,t+ \zeta t_1^r v).
\end{equation}
Note that $\phi(x,t,0)=(x,t)$. We claim that for every $\zeta\in\C$, 
$\phi(\cdotp,\cdotp,\zeta)$ extends to a holomorphic 
fibre preserving self-map of $X$ which agrees 
with the identity to order $r$ along $X_0$.
Indeed, consider its expression in the  
vector bundle chart $X|Y_\alpha$ for some $\alpha\in \{1,\ldots,n\}$.
The fibre coordinates $t'$ on this chart are related to $t$ 
by $t'=(z_\alpha/z_0)^k t$ (see \eqref{eq:transition}), 
and a calculation gives
\[
	\phi(x,t',\zeta)=\left(x,t'+ \zeta (t'_1)^r (z_0/z_\alpha)^{k(r-1)} v\right)
	= \left(x,t'+ \zeta (t'_1)^r (1/x_\alpha)^{k(r-1)} v\right). 
\]
This shows that for every $\zeta\in\C$ 
the map $\phi(\cdotp,\cdotp,\zeta)$ agrees with the identity on $X$ 
over the affine hyperplane $\{z_0=0,\ z_\alpha\ne 0\}$ intersected with $Y$, 
and it agrees with the identity to order $r\ge 2$ along 
$\{t=0\}=X_0$. Since this holds for every $\alpha=1,\ldots,n$, 
the claim follows. Identifying the tangent bundle $T(X|Y_\alpha)$ with 
$TY_\alpha \times T\C^l$, we have that 
\[
	\frac{\di}{\di\zeta}\Big|_{\zeta=0} \phi(x,t,\zeta) = (0,t_1^r v)
\]
and hence
\begin{equation}\label{eq:sphi}
	\frac{\di}{\di\zeta}\Big|_{\zeta=0} s\circ \phi(x,t,\zeta)=
	ds_{(x,t)} (0,t_1^r v) = t_1^r ds_{(x,t)}(0,v).
\end{equation}
Since the spray $s$ is dominating by the assumption, 
we can choose $v\in \C^l$ such that 
$ds_{(x,0)}(0,v)$ equals any given tangent vector $w\in T_x Y$,
and hence the vector \eqref{eq:sphi} equals $t_1^r w$.
Let $\phi_1,\ldots,\phi_d$ for $d=\dim Y$ be sprays of the form
\eqref{eq:phi} for vectors $v_1,\ldots,v_d\in \C^l$ chosen such that 
the vectors $ds_{(x,0)}(0,v_i)$ for $i=1,\ldots,d$ form a basis
of $T_x Y$. Then, there are a neighbourhood $B'\subset\C^d$
of the origin and a neighbourhood $U'\subset U$ of $X_0$
such that the map
$
	s \circ \phi_1\circ\cdots\circ\phi_d : U' \times B' \to Y
$
given by 
\[
	(x,t,\zeta_1,\ldots,\zeta_d) \longmapsto 
	s \circ \phi_1(x,t,\zeta_1)\circ\cdots \circ \phi_d(x,t,\zeta_d)
\]
is a local spray with the core $s$ at $\zeta=0$ 
which is dominating with respect to  
$\zeta=(\zeta_1,\ldots,\zeta_d)$ at $\zeta=0$ 
in a neighbourhood of $(x,0)\in X_0$ in $X$, except on the
hyperplane $t_1=0$, and for every fixed $\zeta\in B'$
it agrees with $s$ to order $r$ along 
the zero section $X_0\cong Y$. Repeating this construction with $t_1^r$
replaced by $t_j^r$ for $j=1,\ldots,l$ and at other points $x\in Y$
(also in other charts on $Y$)
gives finitely many holomorphic sprays $\phi_1,\ldots, \phi_N$ on $X$ 
of the form \eqref{eq:phi} and neighbourhoods $U'\subset U$ of $X_0$
and $0\in B\subset \C^N$ such that the map 
\[
	S=s \circ \phi_1\circ\cdots\circ\phi_N : U' \times B \to Y
\]
satisfies the conclusion of the lemma.
\end{proof}

\begin{remark}\label{rem:maintheorem}
In the proof of Theorem \ref{th:elliptic}, we 
begin with suitably chosen algebraic (polynomial) vector fields
on affine vector bundle charts, which extend 
to algebraic vector fields on a sufficiently negative vector bundle 
on the given manifold $Y$. 
This is only possible on projective manifolds
since a compact complex manifold with a negative 
(or a positive) line bundle is necessarily projective
according to Kodaira \cite{Kodaira1954}. 
The subsequent techniques using flows of vector fields, 
and especially the last step of the proof to construct
a global dominating spray, are transcendental.
Hence, this method does not give algebraic ellipticity.
This is not surprising, for we have already mentioned that 
there are examples of projective Oka manifolds 
that fail to be algebraically elliptic, for example, abelian varieties.
\end{remark}

%
%  FURTHER RESULTS
%
\section{Further results and remarks on ellipticity}
\label{sec:remarks}

In this section we collect some further observations
concerning the relationship between the Oka property and 
ellipticity of a complex manifold.

\begin{remark}\label{rem:elliptic}
If $L\to Y$ is a negative holomorphic line bundle on a compact 
(hence projective) manifold $Y$, 
then for sufficiently large $k>0$ the vector bundle 
$\Hom(L^k,TY)\cong L^{-k}\otimes TY$ on $Y$ is generated by finitely 
many global holomorphic sections $h_1,\ldots, h_N$
(a theorem of Hartshorne; 
see Lazarsfeld \cite[Theorem 6.1.10]{Lazarsfeld-2-2004}). 
Let $E=N L^k$ denote the direct sum of $N$ copies of $L^k$.
Considering $h_i$ as a homomorphism 
$h_i:L^k\to TY$, it follows that the holomorphic vector bundle map
$h = \oplus_{i=1}^N h_i : E\to TY$ is an epimorphism.
%More precisely, $h$ is defined by $h (e_1,\ldots, e_N)=\sum_{i=1}^N h_i(e_i)$, where $e_1,\ldots, e_N\in L^k_y$ for some $y\in Y$ and the sum takes place in $T_yY$. 
Gromov proposed \cite[3.2.A', Step 2, p.\ 879]{Gromov1989}
that such $h$ is the vertical derivative of a local dominating holomorphic 
spray $s:U\to Y$ from an open neighbourhood $U\subset E$
of its zero section $E_0\cong Y$. This would give a shorter
proof of Theorem \ref{th:elliptic}.
Although we do not know how to justify Gromov's claim,
our proof of Theorem \ref{th:elliptic} follows this idea in spirit 
if not to the letter. This raises the following question.
\end{remark}

\begin{problem} \label{prob:localspray}
Which holomorphic vector bundles $E\to Y$ of 
$\mathrm{rank}\,E\ge \dim Y$ 
admit a local dominating spray $s:U\to Y$ from a neighbourhood
$U\subset E$ of the zero section of $E$?
\end{problem}

The following observation generalises 
\cite[Proposition 6.2]{Forstneric2019MMJ}. 
Recall that every complex homogeneous manifold is 
elliptic \cite[Proposition 5.6.1]{Forstneric2017E}, and hence 
an Oka manifold.

\begin{proposition}\label{prop:homogeneous}
Assume that a compact complex manifold $Y$ admits a 
local dominating holomorphic spray $(E,\pi,s)$. If the 
vector bundle $\pi:E\to Y$
is generated by global holomorphic sections, 
then $Y$ is a complex homogeneous manifold. 
\end{proposition}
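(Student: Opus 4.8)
The plan is to use the local dominating spray $(E,\pi,s)$ together with global generation of $E$ to produce enough globally defined holomorphic vector fields on $Y$ to span the tangent space at every point; a compact complex manifold whose holomorphic tangent bundle is globally generated is homogeneous (its automorphism group, which is a complex Lie group by Bochner--Montgomery, acts transitively), so this suffices. First I would recall the standard construction of vector fields from a spray: given a local dominating spray $s:U\to Y$ with $U\subset E$ a neighbourhood of the zero section $E_0\cong Y$, each vector $e$ in a fibre $E_y$ gives, via the vertical derivative $(ds)_{0_y}(e)\in T_yY$, a tangent vector at $y$; domination says these exhaust $T_yY$ as $e$ ranges over $E_y$. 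The issue is that a single fibre element does not vary holomorphically in $y$. This is where global generation enters: choose global holomorphic sections $\sigma_1,\dots,\sigma_\ell$ of $E$ that generate $E$ at every point. For each $i$, define a holomorphic vector field on $Y$ by $\xi_i(y) = (ds)_{0_y}(\sigma_i(y))$; equivalently, $\xi_i = ds\circ\sigma_i$ read through the splitting $TE|_{E_0} = E\oplus TY$. Each $\xi_i$ is a globally defined holomorphic vector field on the compact manifold $Y$.

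Next I would check that $\xi_1,\dots,\xi_\ell$ span $T_yY$ at every $y\in Y$. Fix $y$ and an arbitrary $w\in T_yY$. By domination there is $e\in E_y$ with $(ds)_{0_y}(e)=w$. Since the $\sigma_i(y)$ generate $E_y$, write $e=\sum_i c_i\,\sigma_i(y)$ with $c_i\in\C$; linearity of the vertical derivative $(ds)_{0_y}:E_y\to T_yY$ then gives $w = \sum_i c_i\,\xi_i(y)$. Hence $\{\xi_i\}$ generates $TY$ globally, i.e.\ $TY$ is globally generated by holomorphic vector fields.

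Finally I would invoke the classical fact that a compact complex manifold $Y$ whose holomorphic tangent bundle is spanned at every point by global holomorphic vector fields is complex homogeneous. The argument: the Lie algebra $\mathfrak{g} = H^0(Y,TY)$ is finite-dimensional (coherence plus compactness), its elements are complete vector fields on the compact $Y$, and they integrate to a holomorphic action of a connected complex Lie group $G$ (with Lie algebra $\mathfrak g$) on $Y$; the spanning condition says the orbit map $G\to Y$ is a submersion at the identity, hence every orbit is open, hence (by connectedness of $Y$) there is a single orbit and $Y=G/H$ is homogeneous. I expect the main obstacle to be purely expository: making sure the linearity of the vertical derivative is correctly identified (it is linear precisely because it is a differential at the origin restricted to the fibre direction, which is a linear subspace), and citing cleanly the integration of $\mathfrak g$ to a group action on a compact complex manifold — this is standard (Bochner--Montgomery, or \cite[Proposition 5.6.1]{Forstneric2017E} and its proof), and no genuinely new difficulty arises beyond the two short linear-algebra observations above.
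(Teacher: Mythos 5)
Your proposal is correct and follows essentially the same route as the paper's proof: compose the vertical derivative of the spray with globally generating sections of $E$ to obtain holomorphic vector fields spanning $TY$, then use compactness (completeness of the fields, Bochner--Montgomery) to conclude that $\Aut Y$ acts transitively. No substantive differences beyond your slightly more explicit linear-algebra verification of the spanning step.
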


The condition on $E$ to be globally generated by holomorphic 
sections holds for a trivial bundle 
and for any sufficiently Griffiths positive bundle, 
but fails for negative bundles.

\begin{proof}
Let $s:U\to Y$ be a local dominating spray defined on a
neighbourhood $U\subset E$ of the zero section $E_0$. 
The vertical derivative $V\!ds|E_0: VT(E)|E_0= E \to TY$ 
is a vector bundle epimorphism. 
Given a holomorphic section $\xi : Y\to E$, the map  
\[
	Y\ni y \longmapsto V_\xi(y) := V\!ds(y)(\xi(y)) \in T_yY
\]
is a holomorphic vector field on $Y$. 
(We are using the natural identification of the vertical tangent
bundle $VT(E)|E_0$ on the zero section $E_0$ with the bundle $E$ itself.)
Applying this argument to sections $\xi_1,\ldots,\xi_m:Y\to E$
generating $E$ gives holomorphic vector fields 
$V_1,\ldots,V_m$ on $Y$ spanning the tangent 
bundle $TY$ since $V\!ds$ is surjective. 
Thus, $Y$ is holomorphically flexible.
Since $Y$ is compact, these vector fields are complete, so 
their flows are complex $1$-parameter subgroups of the holomorphic automorphism group $\Aut\,Y$, which is a finite-dimensional complex 
Lie group \cite{BochnerMontgomery1947-2}.
The spanning property implies that $\Aut\,Y$ acts transitively on $Y$,
so $Y$ is homogeneous.
\end{proof}

There are projective Oka manifolds that are not homogeneous, 
for instance, blowups of certain projective manifolds such as projective spaces, 
Grassmannians, etc.; see \cite[Propositions 6.4.5 and 6.4.6]{Forstneric2017E}, 
the papers \cite{KalimanKutzschebauchTruong2018,LarussonTruong2017},
and the survey \cite[Subsect.\ 6.3]{Forstneric2023Indag}.
Many of these manifolds are algebraically elliptic.
Another class of non-homogeneous projective surfaces that 
are algebraically elliptic are the Hirzebruch surfaces $H_l$ for $l=1,2,\ldots$; 
see \cite[p.\ 191]{BarthHulek2004} and 
\cite[Proposition 6.4.5]{Forstneric2017E}. 
In view of Proposition \ref{prop:homogeneous}, 
such manifolds do not admit a local dominating spray 
from any globally generated holomorphic vector bundle.

\begin{remark}
Let $\mathscr S$ be the largest class of complex manifolds for which 
the Oka property implies ellipticity, that is, the class of manifolds that are 
either elliptic or not Oka.  As remarked above, it is long known that every 
Stein manifold belongs to $\mathscr S$.  By Theorem \ref{th:elliptic}, so 
does every projective manifold.  We know of two ways to produce new 
members of $\mathscr S$ from old.  If $Y\to X$ is a covering map and $X$ is elliptic, so is $Y$. Also, $X$ is Oka if and only if $Y$ is.  Hence, a covering space of a manifold in $\mathscr S$ is in $\mathscr S$.  Also, it is easily seen that a product of manifolds in $\mathscr S$ is in $\mathscr S$.
\end{remark}

%
% 	ACKNOWLEDGEMENTS
%
\section*{Acknowledgements}
Forstneri\v c is supported by the European Union 
(ERC Advanced grant HPDR, 101053085) 
and grants P1-0291 and N1-0237 from ARIS, Republic of Slovenia. 
A part of the work on this paper was done during his visit to the University
of Adelaide in February 2025, and he wishes to thank the institution for 
hospitality. The authors thank Yuta Kusakabe for having brought
to our attention the problem with the paper \cite{Prezelj2010}  
mentioned before Lemma \ref{lem:localspray}.

%%%%%%%%%%
%%%%%%%%%%
%%%%%%%%%%
%%%%%%%%%%   THE BIBLIOGRAPHY
%%%%%%%%%%
%%%%%%%%%%


\begin{thebibliography}{10}

\bibitem{AlarconForstnericLarusson2024}
A. Alarc\'on, F. Forstneri\v{c}, and F. L\'arusson,
\newblock \emph{Isotopies of complete minimal surfaces of finite total curvature},
\newblock {\tt arXiv:2406.04767}, (2024).

\bibitem{AndristShcherbinaWold2016}
R.~B. Andrist, N. Shcherbina, and E.~F. Wold,
\newblock \emph{The Hartogs extension theorem for holomorphic vector bundles and sprays},
\newblock Ark. Mat. \textbf{54} (2016), no.~2, 299--319.

\bibitem{ArzhantsevKalimanZaidenberg2024}
I. Arzhantsev, S. Kaliman, and M. Zaidenberg,
\newblock \emph{Varieties covered by affine spaces, uniformly rational varieties and their cones},
\newblock Adv. Math. \textbf{437} (2024), 109449, 18~pp.

\bibitem{Banecki2024X}
J. Banecki,
\newblock \emph{Retract rational varieties are uniformly retract rational},
\newblock {\tt arXiv:2411.17892}, (2024).

\bibitem{BarthHulek2004}
W.~P. Barth, K. Hulek, C.~A.~M. Peters, and A. Van de Ven,
\newblock \emph{Compact complex surfaces},
\newblock volume 4 of Ergebnisse der Mathematik und ihrer Grenzgebiete. 3. Folge. Springer-Verlag, Berlin, second edition, 2004.

\bibitem{BochnerMontgomery1947-2}
S. Bochner and D. Montgomery,
\newblock \emph{Groups on analytic manifolds},
\newblock Ann. of Math. (2) \textbf{48} (1947), 659--669.

\bibitem{Forstneric2023Indag}
F. Forstneri{\v{c}},
\newblock \emph{Recent developments on Oka manifolds},
\newblock Indag. Math. (N.S.) \textbf{34} (2023), no.~2, 367--417.

\bibitem{Forstneric2002MZ}
F. Forstneri\v{c},
\newblock \emph{The Oka principle for sections of subelliptic submersions},
\newblock Math. Z. \textbf{241} (2002), no.~3, 527--551.

\bibitem{Forstneric2006AJM}
F. Forstneri\v{c},
\newblock \emph{Holomorphic flexibility properties of complex manifolds},
\newblock Amer. J. Math. \textbf{128} (2006), no.~1, 239--270.

\bibitem{Forstneric2006AM}
F. Forstneri\v{c},
\newblock \emph{Runge approximation on convex sets implies the Oka property},
\newblock Ann. of Math. (2) \textbf{163} (2006), no.~2, 689--707.

\bibitem{Forstneric2013KAWA}
F. Forstneri\v{c},
\newblock \emph{Oka manifolds: from Oka to Stein and back},
\newblock Ann. Fac. Sci. Toulouse Math. (6) \textbf{22} (2013), no.~4, 747--809.
With an appendix by Finnur L\'arusson.

\enlargethispage{1em}
\bibitem{Forstneric2017E}
F. Forstneri\v{c},
\newblock \emph{Stein manifolds and holomorphic mappings (The homotopy principle in complex analysis)},
\newblock volume 56 of Ergebnisse der Mathematik und ihrer Grenzgebiete. 3. Folge. Springer, Cham, 2017.

\bibitem{Forstneric2019MMJ}
F. Forstneri\v{c},
\newblock \emph{Mergelyan's and Arakelian's theorems for manifold-valued maps},
\newblock Mosc. Math. J. \textbf{19} (2019), no.~3, 465--484.

\bibitem{ForstnericLarusson2011}
F. Forstneri\v{c} and F. L\'arusson,
\newblock \emph{Survey of Oka theory},
\newblock New York J. Math. \textbf{17A} (2011), 11--38.

\bibitem{ForstnericPrezelj2000}
F. Forstneri\v{c} and J. Prezelj,
\newblock \emph{Oka's principle for holomorphic fiber bundles with sprays},
\newblock Math. Ann. \textbf{317} (2000), no.~1, 117--154.

\bibitem{Grauert1962MA}
H. Grauert,
\newblock \emph{\"Uber Modifikationen und exzeptionelle analytische Mengen},
\newblock Math. Ann. \textbf{146} (1962), 331--368.

\bibitem{Gromov1989}
M. Gromov,
\newblock \emph{Oka's principle for holomorphic sections of elliptic bundles},
\newblock J. Amer. Math. Soc. \textbf{2} (1989), no.~4, 851--897.

\bibitem{KalimanKutzschebauchTruong2018}
S. Kaliman, F. Kutzschebauch, and T.~T. Truong,
\newblock \emph{On subelliptic manifolds},
\newblock Israel J. Math. \textbf{228} (2018), no.~1, 229--247.

\bibitem{KalimanZaidenberg2024MRL}
S. Kaliman and M. Zaidenberg,
\newblock \emph{Algebraic Gromov ellipticity of cones over projective manifolds},
\newblock Math. Res. Lett. \textbf{31} (2024), no.~6, 1785--1817.

\bibitem{KalimanZaidenberg2024FM}
S. Kaliman and M. Zaidenberg,
\newblock \emph{Gromov ellipticity and subellipticity},
\newblock Forum Math. \textbf{36} (2024), no.~2, 373--376.

\bibitem{KalimanZaidenberg2025}
S. Kaliman and M. Zaidenberg,
\newblock \emph{Algebraic Gromov's ellipticity of cubic hypersurfaces},
\newblock Proc. Steklov Inst. Math. \textbf{329} (2025), 79--87. 

\bibitem{Kodaira1954}
K. Kodaira,
\newblock \emph{On K\"ahler varieties of restricted type (An intrinsic characterization of algebraic varieties)},
\newblock Ann. of Math. (2) \textbf{60} (1954), 28--48.

\bibitem{Kusakabe2021IUMJ}
Y. Kusakabe,
\newblock \emph{Elliptic characterization and localization of Oka manifolds},
\newblock Indiana Univ. Math. J. \textbf{70} (2021), no.~3, 1039--1054.

\bibitem{Kusakabe2024AM}
Y. Kusakabe,
\newblock \emph{Oka properties of complements of holomorphically convex sets},
\newblock Ann. of Math. (2) \textbf{199} (2024), no.~2, 899--917.

\bibitem{LarussonTruong2017}
F. L\'arusson and T.~T. Truong,
\newblock \emph{Algebraic subellipticity and dominability of blow-ups of affine spaces},
\newblock Doc. Math. \textbf{22} (2017), 151--163.

\bibitem{LarussonTruong2019}
F. L\'arusson and T.~T. Truong,
\newblock \emph{Approximation and interpolation of regular maps from affine varieties to algebraic manifolds},
\newblock Math. Scand. \textbf{125} (2019), no.~2, 199--209.

\bibitem{Lazarsfeld-2-2004}
R. Lazarsfeld,
\newblock \emph{Positivity in algebraic geometry. II},
\newblock volume 49 of Ergebnisse der Mathematik und ihrer Grenzgebiete. 3. Folge. Springer-Verlag, Berlin, 2004.

\bibitem{Prezelj2010}
J. Prezelj,
\newblock \emph{A relative Oka-Grauert principle for holomorphic submersions over 1-convex spaces},
\newblock Trans. Amer. Math. Soc. \textbf{362} (2010), no.~8, 4213--4228.

\bibitem{Prezelj2016}
J. Prezelj,
\newblock \emph{Positivity of metrics on conic neighborhoods of 1-convex submanifolds},
\newblock Internat. J. Math. \textbf{27} (2016), no.~5, 1650047, 24 pp.

\bibitem{Serre1955AIF}
J.-P. Serre,
\newblock \emph{G\'eom\'etrie alg\'ebrique et g\'eom\'etrie analytique},
\newblock Ann. Inst. Fourier (Grenoble) \textbf{6} (1955--1956), 1--42.

\bibitem{Stopar2013}
K. Stopar,
\newblock \emph{Approximation of holomorphic mappings on 1-convex domains},
\newblock Internat. J. Math. \textbf{24} (2013), no.~14, 1350108, 33 pp.

\bibitem{Zaidenberg2024ellipticitysurvey}
M. Zaidenberg,
\newblock \emph{Algebraic Gromov ellipticity: a brief survey},
\newblock Taiwanese J. Math. \textbf{29} (2025), no.~6, 1681--1705.

\end{thebibliography}
\end{document}